\newtheorem{theorem}{Theorem}[section]
\newtheorem*{theorem*}{Theorem}
\newtheorem{lemma}{Lemma}[section]
\newtheorem{corollary}{Corollary}[section]
\theoremstyle{remark}
\newtheorem{example}{Example}[section]
\newtheorem{remark}{Remark}[section]
\newcommand{\NN}{\mathbb{N}}
\newcommand{\ind}{\mathbbm{1}}
\newcommand{\EE}{\mathbb{E}}
\newcommand{\PP}{\mathbb{P}}
\begin{document}
\title{Sign changes of the partial sums of a random multiplicative function III: Average}
\author{Marco Aymone}
\begin{abstract}
Let $V(x)$ be the number of sign changes of the partial sums up to $x$, say $M_f(x)$, of a Rademacher random multiplicative function $f$. We prove that the averaged value of $V(x)$ is at least $\gg (\log x)(\log\log x)^{-1/2-\epsilon}$. Our new method applies for the counting of sign changes of the partial sums of a system of orthogonal random variables having variance $1$ under additional hypothesis on the moments of these partial sums. In particular, we extend to larger classes of dependencies an old result of Erd\H{o}s and Hunt on sign changes of partial sums of i.i.d. random variables. In the arithmetic case, the main input in our method is the ``\textit{linearity}'' phase in $1\leq q\leq 1.9$ of the quantity $\log \EE |M_f(x)|^q$, provided by the Harper's \textit{better than squareroot cancellation} phenomenon for small moments of $M_f(x)$. 
\end{abstract}
\maketitle
\section{Introduction}
\subsection{Main result and background} 
In a recent paper by Heap, Zhao and the author \cite{aymone_sign_changes} it was exhibited a proof for the claim that the arithmetic random walk performed by the partial sums of a Rademacher random multiplicative function $f$, say $M_f(x):=\sum_{n\leq x}f(n)$, visits the origin an infinite number of times, almost surely (see also an extension of this result for weighted partial sums, by the author \cite{aymone_sign_changes_II}). The method presented there was used by Geis and Hiary \cite{geis_counting_sign_changes} to give an almost sure lower bound for the number of sign changes $V(x)$ of $M_f(u)$ in the interval $u\in[1,x]$. They showed that
$$V(x)\gg_\delta (\log\log\log x)^{1/2-\delta}, \,a.s., \mbox{ for any }\delta>0.$$

In a very recent update in a preprint by Klurman, Lamzouri and Munsch \cite{klurman_counting_sign_changes}, they showed that
$$V(x)\gg \frac{\log\log x}{\log\log\log\log x}, \,a.s.$$

In this third of a series of papers on sign changes of the partial sums of a random multiplicative function, we are interested in the average of $V(x)$. 

\begin{corollary}\label{corollary average} Let $V(x)$ be the number of sign changes of $M_f(u)$ in the interval $u\in [1,x]$. Then there exists a constant $\kappa>0$, such that for each fixed $0<\epsilon<1/100$, for all $x\geq x_0=x_0(\epsilon)$,
$$\EE V(x)\geq \kappa\frac{\log x}{(\log \log x)^{1/2+\epsilon}}.$$
\end{corollary}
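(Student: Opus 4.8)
\emph{Reduction to dyadic scales.} The plan is to prove the bound by a multiscale second--moment argument, turning the sign--change count into a sum of per--scale opposite--sign probabilities and feeding in Harper's better--than--squareroot estimates as the only arithmetic input. First I would fix a geometric sequence of cutoffs $N_j\asymp 2^j$ for $0\le j\le J$ with $J\asymp \log x/\log 2$. Since the blocks $(N_{j-1},N_j]$ are disjoint and $M_f$ is a step function changing by $f(n)\in\{-1,0,1\}$, a change of sign inside one block is a genuine sign change counted by $V(x)$, and opposite signs at the two endpoints force at least one such change inside. Hence
\[
V(x)\ge \sum_{j=1}^{J}\ind\big[M_f(N_{j-1})M_f(N_j)<0\big],\qquad \EE V(x)\ge \sum_{j=1}^{J}\PP\big(M_f(N_{j-1})M_f(N_j)<0\big).
\]
It therefore suffices to show each summand is $\gg (\log\log x)^{-1/2-\epsilon}$, so that the $J\asymp\log x$ terms assemble to the claimed bound.

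\emph{Per--scale bound from the moments.} Writing $S=M_f(N_{j-1})$ and $D=M_f(N_j)-M_f(N_{j-1})$, the symmetry $f\mapsto -f$ (which fixes the law of $f$ while sending $(S,D)\mapsto(-S,-D)$) gives $\PP(S(S+D)<0)\ge 2\,\PP(0<S<a,\ D<-a)$ for any $a>0$, since $0<S<a$ and $D<-a$ already force $D<-S$. I would take $a\asymp \EE|S|\asymp \sqrt{N_j}/(\log\log N_j)^{1/4}$. The two analytic inputs are: (i) the first--moment lower bound $\EE|M_f(N)|\gg \sqrt N/(\log\log N)^{1/4}$ together with the comparability $\EE|M_f(N)|^q\asymp(\EE|M_f(N)|)^q$ for $q\in[1,1.9]$ — both read off from the linearity phase of $\log\EE|M_f(N)|^q$ — which by a reverse--H\"older (Paley--Zygmund) inequality force $|S|$ and $|D|$ to be of order $\EE|S|$ with probability $\gg 1$; and (ii) the honest variance $\EE M_f(N)^2\asymp N$. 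A second--moment (Cauchy--Schwarz) argument then converts this first--moment information into
\[
\PP\big(M_f(N_{j-1})M_f(N_j)<0\big)\ \gg\ \frac{(\EE|M_f(N_j)|)^2}{\EE M_f(N_j)^2}\ \asymp\ (\log\log N_j)^{-1/2},
\]
the point being that the numerator collects the better--than--squareroot saving while the denominator pays the unsaved second moment.

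\emph{The obstacle and the assembly.} The serious difficulty is that $S$ and $D$ are \emph{not} independent: involving overlapping sets of primes, multiplicativity delivers only orthogonality, $\EE[SD]=0$, so the clean product bound $\PP(0<S<a)\,\PP(D<-a)$ is unavailable. The heart of the matter — and precisely what the paper's general theorem is designed to supply — is to run the second--moment estimate above using \emph{only} orthogonality and the moment/linearity bounds; this is exactly why one must stay strictly below the $L^2$ endpoint, so that the better--than--squareroot saving survives into the final bound, which pins the admissible window at $1\le q\le 1.9$. Granting the per--scale estimate, I would sum over $j$, using that $\log\log N_j\asymp\log\log x$ for all but a negligible range of $j\le J$, to obtain $\EE V(x)\gg \sum_{j\le J}(\log\log N_j)^{-1/2}\gg \log x\,(\log\log x)^{-1/2}$; the extra factor $(\log\log x)^{-\epsilon}$ then absorbs the quantitative slack (the strict inequality $q<2$, the H\"older exponents, the variation of $\log\log N_j$ across scales, and the implied constants), yielding $\EE V(x)\ge \kappa\,\log x/(\log\log x)^{1/2+\epsilon}$ for some $\kappa>0$, all $0<\epsilon<1/100$ and $x\ge x_0(\epsilon)$. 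I expect this conversion of moment \emph{upper} bounds into a \emph{lower} bound on the opposite--sign probability under dependence — a joint anticoncentration statement proved with $L^2$/$L^q$ tools alone — to be the main obstacle.
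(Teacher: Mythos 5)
There is a genuine gap, and it sits exactly where you placed your ``Granting the per-scale estimate'' clause: the bound $\PP\big(M_f(N_{j-1})M_f(N_j)<0\big)\gg(\log\log N_j)^{-1/2-\epsilon}$ is never proved, and it is \emph{not} what the paper's machinery supplies. The paper's local theorem does not bound a two-point opposite-sign probability; it shows that a window $[x,e^Nx]$ containing $N$ consecutive scales $e^nx$, $n\le N$, contains a sign change with constant probability, and this only works when the window is long, namely $\log\log x\ll N^{2-\epsilon}$. The mechanism is a comparison of $S_N^*=\sum_{n\le N}|Y_n|$ (of size $\asymp N(\log\log x)^{-1/4}$ by Harper's low-moment estimates, with probability bounded below via H\"older at exponent $3/2$) against $|S_N|=|\sum_{n\le N}Y_n|$ (of size $\asymp\sqrt N$ by orthogonality and Chebyshev); a sign change is forced precisely because $N(\log\log x)^{-1/4}\gg\sqrt N$ once $N\gg(\log\log x)^{1/2+\epsilon}$. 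For a single dyadic step, i.e.\ $N=O(1)$ and two points, this comparison yields nothing, which is exactly why the paper cuts $[1,x]$ into only $\asymp\log x/(\log\log x)^{1/2+\epsilon}$ blocks of logarithmic length $(\log\log x)^{1/2+\epsilon}$, each carrying constant sign-change probability, rather than into $\asymp\log x$ dyadic blocks each carrying probability $(\log\log x)^{-1/2}$. Your Paley--Zygmund ratio $(\EE|M_f(N_j)|)^2/\EE M_f(N_j)^2$ lower-bounds the probability that $|M_f(N_j)|$ attains its typical size, not the probability that two \emph{dependent} partial sums disagree in sign; moreover orthogonality works against you here, since $\EE[M_f(N_{j-1})M_f(N_j)]=\EE M_f(N_{j-1})^2\asymp N_{j-1}$, so the two endpoint values have correlation $\asymp 1/\sqrt2>0$. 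Converting moment information into joint anticoncentration at a single scale is an open difficulty that neither your sketch nor the paper resolves; a warning sign is that your per-scale claim, if true, would assemble to $\EE V(x)\gg\log x\,(\log\log x)^{-1/2}$, strictly stronger than the corollary being proved.

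A second, smaller but real error: the symmetry you invoke does not exist. For a Rademacher random multiplicative function the map $f\mapsto-f$ is not law-preserving: $f(1)=1$, and $f(2)f(3)f(6)=f(2)^2f(3)^2=1$ identically, so the joint law of $(f(n))_n$ is supported on configurations that global negation sends to a null set. Hence $(S,D)\mapsto(-S,-D)$ is not a distributional symmetry, and the step $\PP(S(S+D)<0)\ge 2\,\PP(0<S<a,\ D<-a)$ is unjustified. The paper's argument never needs such a symmetry; it extracts sign changes purely from the incompatibility of ``$S_N^*$ large'' with ``$|S_N|$ small,'' which is why it generalizes (Theorem 1.3) to arbitrary orthogonal systems with the stated moment linearity.
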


In a series of two papers \cite{pintz_sign_changes_1} and \cite{Pintz_sign_changes_2} by Kaczorowski and Pintz, it was proved the best result known up to date for the Mertens function: There is at least a constant $c>0$ times $\log x$ sign changes of $\sum_{n\leq u}\mu(n)$ in the interval $[1,x]$. Thus our Corollary above shows a similar result between random multiplicative functions with what is known\footnote{For the i.i.d Rademacher random walk, we have, in average, $\asymp\sqrt{x}$ sign changes of this walk up to time $x$. It seems plausible to speculate something similar for the ``M\"obius walk'' and for Rademacher random multiplicative functions.} for the M\"obius function.

\begin{remark} As it is common in \textit{Measure Theory}, the average behavior can be, most part of the times, very different from the almost sure behavior. In this sense, we are unsure whether our Corollary \ref{corollary average} would lead to improvements in the almost sure result of Klurman, Lamzouri and Munsch \cite{klurman_counting_sign_changes}.
\end{remark}

We obtain our Corollary \ref{corollary average} as a consequence of the following local result for sign changes.

\begin{theorem}\label{thorem local sign changes} Let $N\in\NN$ and $x\geq 1$ be sufficiently large and such that, $N=o(\log x)$ and, for fixed $0<\epsilon<1/1000$, $\log \log x\ll  N^{2-\epsilon}$. Then the probability that $M_f(u)$ has at least one sign change in the interval $u\in[x,e^N x]$ is at least an absolute constant $\kappa>0$, for all $x$ sufficiently large.
\end{theorem}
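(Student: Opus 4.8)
The plan is to reduce the statement to a first‑and‑second moment estimate on a counting variable built from a geometric sample of the partial sums. Put $\sigma_j=\operatorname{sgn}M_f(e^jx)$ for $j=0,1,\dots,N$ and let
\[
Z=\sum_{j=1}^{N}\ind_{\sigma_{j-1}\neq\sigma_j}
\]
be the number of sign changes among the sampled values $M_f(x),M_f(ex),\dots,M_f(e^Nx)$. If $M_f(u)$ has no sign change on $[x,e^Nx]$ then all the $\sigma_j$ coincide, so $\{\text{no sign change}\}\subseteq\{Z=0\}$ and it suffices to prove $\PP(Z\geqs 1)\geqs\kappa$. I would obtain this from the Paley--Zygmund bound $\PP(Z\geqs1)\geqs(\EE Z)^2/\EE Z^2$, which reduces the theorem to a lower bound on $\EE Z$ and a matching upper bound on $\EE Z^2$.

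For the first moment I would estimate each term $\PP(\sigma_{j-1}\neq\sigma_j)=\PP(UV<0)$ with $U=M_f(e^{j-1}x)$ and $V=M_f(e^jx)=U+\Delta$. Here orthogonality of $f$ gives $\EE[U\Delta]=0$, and the increment is comparable to the running sum, $\EE\Delta^2=(e-1)\EE U^2$, so $\operatorname{corr}(U,V)=e^{-1/2}$ is an absolute constant below $1$. The key identity is
\[
\EE\bigl[|U|\,\ind_{UV<0}\bigr]=\tfrac12\bigl(\EE|U|-\EE[U\operatorname{sgn}V]\bigr),
\]
together with the bound $\EE[U\operatorname{sgn}V]\leqs(1-c)\EE|U|$ for some $c>0$, which I would derive from $\EE[U\Delta]=0$ and the comparability of the variances (decorrelating $\operatorname{sgn}V$ from $U$). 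Applying H\"older's inequality, $\EE[|U|\,\ind_{UV<0}]\leqs(\EE|U|^q)^{1/q}\,\PP(UV<0)^{1-1/q}$, and invoking the \emph{linearity} phase of $\log\EE|M_f|^q$ on $1\leqs q\leqs1.9$, which yields $(\EE|U|^q)^{1/q}\leqs(\log\log x)^{(q-1)/(2q)}\EE|U|$, one gets $\PP(UV<0)\geqs c_q(\log\log x)^{-1/2}$; the exponent is exactly $\tfrac12$ because raising $(\log\log x)^{-(q-1)/(2q)}$ to the conjugate power $q/(q-1)$ cancels the $q$‑dependence. Summing over $j$ gives $\EE Z\geqs c'\,N(\log\log x)^{-1/2}$, and the hypothesis $\log\log x\ll N^{2-\epsilon}$ forces $\EE Z\to\infty$.

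For the second moment I would expand $\EE Z^2=\sum_{i,j}\PP(\sigma_{i-1}\neq\sigma_i,\ \sigma_{j-1}\neq\sigma_j)$ and show that the off‑diagonal covariances are summably small, using that the correlation of $M_f$ at scales $e^i$ and $e^j$ is $e^{-|i-j|/2}$ and decays geometrically in the scale gap; this should give $\EE Z^2\leqs(\EE Z)^2+O(\EE Z)$, whence $\PP(Z\geqs1)\geqs\EE Z/(\EE Z+1)\to1$, which is far stronger than the claimed constant $\kappa$. The main obstacle, present in both moment computations, is that a Rademacher random multiplicative function does \emph{not} have independent increments: the new randomness in $\Delta$ interacts with the small primes already appearing in $U$. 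Consequently the cross‑term bound $\EE[U\operatorname{sgn}V]\leqs(1-c)\EE|U|$ and the covariance estimates cannot be read off from independence and must instead be extracted from orthogonality together with the moment hypotheses, while the non‑Gaussian low‑moment behavior coming from Harper's better‑than‑squareroot cancellation is precisely what produces the $(\log\log x)^{-1/2}$ per‑step loss and thereby dictates the range $\log\log x\ll N^{2-\epsilon}$ needed to amplify it to a constant.
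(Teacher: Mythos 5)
Your reduction to the sampled sequence and Paley--Zygmund is reasonable in outline, but both moment estimates rest on claims that are not proved and, as far as I can tell, cannot be proved by the means you indicate. The central gap is the cross-term bound $\EE[U\operatorname{sgn}V]\leq(1-c)\EE|U|$. This does not follow from $\EE[U\Delta]=0$ together with comparability of variances: take $\Delta=U(W-1)$ with $W\geq 0$ independent of $U$, $\EE W=1$, $\EE(W-1)^2\asymp 1$; then $\EE[U\Delta]=0$ and $\EE\Delta^2\asymp\EE U^2$, yet $V=UW$ \emph{never} changes sign, so $\EE[U\operatorname{sgn}V]=\EE|U|$. Orthogonality and second-moment data simply carry no sign information, so the "decorrelation" you invoke has to come from somewhere else. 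Moreover, note what your claim would give if it were true: combined with Harper's linearity in its actual strength (which makes $(\EE|U|^q)^{1/q}\asymp_q\EE|U|$, not merely $\leq(\log\log x)^{(q-1)/(2q)}\EE|U|$), your H\"older step would yield $\PP\bigl(M_f(x)M_f(ex)<0\bigr)\geq c$ for an absolute constant, hence $\EE V(x)\gg\log x$ --- strictly stronger than the paper's corollary, which loses a factor $(\log\log x)^{1/2+\epsilon}$. That loss is a signal that no per-step sign-change probability bound is currently accessible; your proposal treats the whole difficulty of the problem as a lemma. The second-moment step has a similar gap: geometric decay of $\operatorname{corr}(M_f(e^ix),M_f(e^jx))$ does not transfer to near-independence of the nonlinear indicator events $\{\sigma_{i-1}\neq\sigma_i\}$ without joint-distribution information (e.g.\ joint Gaussianity), which is unavailable --- indeed Harper's theorem says precisely that the low moments are non-Gaussian.

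The paper's proof is structured to avoid ever estimating a per-step (or per-pair) sign-change probability. It works with the aggregates $S_N=\sum_{n\leq N}Y_n$ and $S_N^*=\sum_{n\leq N}|Y_n|$, where $Y_n=M_f(e^nx)/\sqrt{e^nx}$, and plays two estimates against each other: (i) by Harper's moment bounds at $q=1$ and $q=3/2$ and H\"older's inequality applied to $S_N^*$ itself,
\begin{equation*}
\PP\Bigl(S_N^*\geq \epsilon\, c\, N(\log\log x)^{-1/4}\Bigr)\geq c_7>0,
\end{equation*}
and (ii) by Chebyshev plus the correlation decay $|\rho_{n,m}|\ll e^{-(m-n)/2}$, the signed sum satisfies $|S_N|\ll N^{1/2}$ up to factors, so $\PP\bigl(|S_N|\geq \Lambda(N,x,1)^{1-\delta}\bigr)=o(1)$ precisely when $\log\log x\ll N^{2-\epsilon}$. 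On the intersection of these two events the $Y_n$ cannot all share one sign, since otherwise $|S_N|=S_N^*$. This is where the hypothesis on $N$ enters (it makes $\sqrt{N}=o(N(\log\log x)^{-1/4})$), whereas in your argument the same hypothesis appears only at the final amplification step, after the unproven per-step bound has already done the real work. If you want to salvage your approach, the thing to aim at is exactly the missing anti-concentration statement for $(M_f(x),M_f(ex))$; short of that, the aggregate comparison of $S_N$ with $S_N^*$ is the available route.
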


\subsection{A more general Theorem, and examples} Our method has two main inputs. The first one is the orthogonality of the random variables $(f(n))_n$. Under this orthogonality, it is easy to show that the correlation between $M_f(e^nx)$ and $M_f(e^mx)$
is exponentially small on the quantity $|m-n|$.

The second main input is the remarkable Harper's \textit{better than square root cancellation} \cite{harperfirstmoment} for the moments $\EE |M_f(x)|^q$, for $1\leq q<2$. Actually, the better than the square root cancellation is not, a priori, the key feature to prove our results, but the following linearity in $q$:
$$\log \EE |M_f(x)|^q \asymp \frac{q}{2}\log \left(\frac{x}{(1+(1-q/2)\sqrt{\log\log x})}\right).$$ 

Our method allows us to prove the following result.

\begin{theorem}\label{theorem geral} Let $(X_n)_n$ be orthogonal random variables such that $\EE X_n=0$ and $\EE X_n^2=1$ for all $n$. Let $M(u):=\sum_{n\leq u}X_n$ denote the partial sums of $X_n$. Assume that for some $1\leq q_1<q_2\leq 2$, as $x\to\infty$ the $L^{q_1}$ and $L^{q_2}$ norm satisfy the linearity condition $\|M(x)\|_{q_1}\asymp \|M(x)\|_{q_2}$. Assume further that 
$$\|M(x)\|_{q_1}\asymp\frac{\sqrt{x}}{\psi(x)},$$
for some continuous and and non-decreasing function $\psi:[1,\infty)\to[1,\infty)$. Further, assume that for $n=o(\log x)$,
$$\psi(e^n x)=\psi(x)\left(1+O\left(\frac{n}{\log x}\right)\right).$$
If $\psi(x)^{2+\epsilon}\ll N=o(\log x)$ for some small fixed $0<\epsilon<1/100$, then the probability that $M(u)$ has at least one sign change in the interval $u\in[x,e^N x]$ is at least an absolute constant $\kappa$, if $x$ is sufficiently large.
\end{theorem}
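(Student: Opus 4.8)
The plan is to show that with probability bounded below by an absolute constant, the partial sums $M(u)$ take both a sufficiently positive and a sufficiently negative value at two points in the dyadic-type window $[x,e^N x]$, which forces a sign change by continuity of the partial-sum step function. The natural discretization is to sample $M$ at the geometrically spaced points $x, e x, e^2 x, \dots, e^N x$, i.e.\ at $u_j = e^j x$ for $j=0,1,\dots,N$, and to study the increments $D_j := M(e^j x) - M(e^{j-1} x) = \sum_{e^{j-1}x < n \le e^j x} X_n$. The key structural fact I would extract from orthogonality is that the $D_j$ are uncorrelated with $\EE D_j = 0$ and $\EE D_j^2 = \lfloor e^j x\rfloor - \lfloor e^{j-1}x\rfloor \asymp e^j x$, which is the ``exponentially small correlation'' remark made in the introduction rephrased at the level of block increments.

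First I would set up a second-moment / variance comparison. The variance of the terminal value $M(e^N x)$ is $\asymp e^N x$, and the scale against which one must compare is the typical size $\|M\|_{q_1}\asymp \sqrt{x}/\psi(x)$. The whole point of the hypotheses is that the window length $N$ and the smallness factor $\psi$ interact so that over the window the walk moves by many units of its own typical fluctuation size: since $\psi(e^n x)\asymp\psi(x)$ uniformly for $n=o(\log x)$ (this is exactly what the regularity hypothesis on $\psi$ buys), we have $\|M(e^j x)\|_{q_1}\asymp \sqrt{e^j x}/\psi(x)$ for every $j\le N$, so the standard deviation grows geometrically in $j$ while $\psi$ stays essentially frozen. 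The condition $\psi(x)^{2+\epsilon}\ll N$ is what guarantees the window is long enough for the accumulated motion to dominate the ``anti-concentration'' obstruction.

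The heart of the argument is a small-ball (anti-concentration) bound together with a second-moment lower bound, and I expect this to be the main obstacle. The difficulty is that $\EE M(u)=0$ with variance $\asymp u$, so a naive Paley--Zygmund or Chebyshev argument would give a sign change trivially if the $X_n$ were independent; here the subtlety is the \emph{better than squareroot cancellation}, encoded in $\|M(x)\|_{q_1}\ll\sqrt{x}/\psi(x)$ with $\psi\to\infty$, which means $M$ is \emph{smaller} than a random walk and hence less prone to oscillate. The linearity hypothesis $\|M(x)\|_{q_1}\asymp\|M(x)\|_{q_2}$ for $q_1<q_2$ is precisely the tool to convert a lower bound on a low moment into control of the distribution: by Hölder interpolation between the $q_1$ and $q_2$ norms (or, run the other way, a reverse Hölder / Paley--Zygmund inequality), the comparability of two distinct $L^q$ norms forces $M(x)/\|M(x)\|_{q_1}$ to have a nondegenerate distribution that is bounded away from a point mass at $0$, i.e.\ $\PP(|M(x)| \ge \delta \|M(x)\|_{q_1})\ge c$ for absolute $\delta,c>0$. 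I would make this quantitative: the ratio $\|M\|_{q_2}/\|M\|_{q_1}$ being $\asymp 1$ gives, via the Paley--Zygmund inequality applied to $|M|^{q_1}$, a lower bound $\PP(|M(u)|\ge \delta \|M(u)\|_{q_1})\ge c$ that is uniform in $u$.

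Finally I would assemble these pieces. Using the uniform anti-concentration at the scale $\sqrt{u}/\psi(x)$ together with the geometric growth of the variance across the window, I would argue that for some $j$ in the upper part of the window the increment $D_j$ is, with constant probability, large in absolute value compared to the \emph{total} typical size $\sqrt{e^{j-1}x}/\psi(x)$ of everything accumulated before it; because the $D_j$ are uncorrelated, one can choose a sparse subsequence of scales $j_1 < j_2 < \cdots$ spaced so that the blocks are nearly independent in the $L^2$ sense and each contributes a fluctuation exceeding the running partial sum, which by a sign-balancing / symmetry argument produces values of $M$ of both signs in $[x,e^N x]$ with probability at least $\kappa$. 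Concretely, conditioning on the sign of the dominant block increment and using that its magnitude beats the combined typical magnitude of all other terms (this is where $N\gg\psi(x)^{2+\epsilon}$ enters, ensuring enough nearly-independent blocks that at least one dominates), one deduces $\PP(\exists\, j: \operatorname{sgn} M(e^j x)\neq \operatorname{sgn} M(e^{j-1}x))\ge\kappa$. The main obstacle, to reiterate, is making the anti-concentration step effective despite the better-than-squareroot cancellation; everything else is a second-moment bookkeeping exercise that the hypotheses on $\psi$ are tailored to close.
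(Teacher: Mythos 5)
Your setup (geometric sampling points $e^j x$, orthogonality giving uncorrelated second-moment structure, and a Paley--Zygmund-type anti-concentration bound extracted from the hypothesis $\|M\|_{q_1}\asymp\|M\|_{q_2}$) matches the paper's ingredients, but your final assembly step has a genuine gap. You propose to find a dominant block increment $D_j$ whose magnitude beats the accumulated sum before it, and then to ``condition on the sign of the dominant block increment'' and invoke a ``sign-balancing / symmetry argument'' to force values of both signs. Under the stated hypotheses this step is not available: the $X_n$ are only assumed \emph{orthogonal}, not independent and not symmetric, so you cannot condition on the sign of one block and retain any control over the others, and you have no quantitative lower bound on $\PP(D_j<0\,|\,M(e^{j-1}x)>0)$ or its analogues. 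Uncorrelatedness constrains only second moments; the joint law of the signs can be adversarial. Indeed the paper stresses (in the comparison with Erd\H{o}s--Hunt) that avoiding the symmetry hypothesis is precisely a selling point of the method, so any proof resting on symmetry or on near-independence of blocks is proving a different theorem. A secondary issue: your anti-concentration bound $\PP(|M(u)|\geq\delta\|M(u)\|_{q_1})\geq c$ is correct pointwise, but pointwise bounds at each scale do not combine into a sign-change statement without exactly the kind of joint distributional control you lack.

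The paper closes this gap with a different, conditioning-free contrast. Set $Y_n=M(e^nx)/\sqrt{e^nx}$, $S_N=\sum_{n\leq N}Y_n$ and $S_N^*=\sum_{n\leq N}|Y_n|$. If all the $Y_n$ had the same sign one would have $|S_N|=S_N^*$; so it suffices to show that with constant probability $S_N^*$ is large (of order $\Lambda_1:=\sum_{n\le N}\psi(e^nx)^{-1}\asymp N/\psi(x)$) while simultaneously $|S_N|$ is much smaller (at most $\Lambda_1^{1-\delta}$). The first event gets constant probability by running your Hölder-splitting (Paley--Zygmund) argument on the \emph{aggregate} $S_N^*$ rather than on a single $M(u)$: $\EE S_N^*\gg N/\psi(x)$ from the $q_1$-norm hypothesis, while Minkowski plus the linearity condition gives $\|S_N^*\|_{q_2}\ll N/\psi(x)$. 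The second event has probability $1-O(N^{-\delta})$ by Chebyshev, since orthogonality makes the correlations of the normalized partial sums $Y_n$, $Y_m$ decay like $e^{-|m-n|/2}$, whence $\EE S_N^2\ll N$; this is where the condition $\psi(x)^{2+\epsilon}\ll N$ enters, to make $\Lambda_1^{2-2\delta}\gg N\psi(x)^{-2(1-\delta)}N^{\,2\delta}\cdot N^{-\delta}$ beat the variance. Intersecting the two events forces a sign change among $Y_1,\dots,Y_N$ with probability $\geq\kappa$, with no appeal to independence or symmetry.
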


\subsubsection{Examples}

\begin{example}
If $\psi(x)$ can be taken to be a constant in Theorem \ref{theorem geral}, then the number of sign changes $V(x)$ of the partial sums $M(u)$ in the interval $u\in[1,x]$ is such that 
$$\EE V(x)\gg \log x.$$

Concrete examples of this can be achieved via \textit{Martingale Theory}  in Probability. Indeed, let $(X_n)_n$ be a Martingale difference, that is, a sequence of random variables with the following properties:
\begin{itemize} 
\item $\EE |X_n|<\infty$, for all $n$;
\item There is a sequence of filtrations $(\mathcal{F}_n)_n$ such that $X_n$ is $\mathcal{F}_n$-measurable;
\item $\EE(X_n|\mathcal{F}_{n-1})=0$, for all $n\geq 1$.
\end{itemize}

The tower property for conditional expectation guarantees that the random variables $(X_n)$ are orthogonal and $\EE X_n=0$, for all $n$. Moreover, we have the Burkholder's inequality (see the book of Shiryaev \cite{shiryaev} pg. 499): For each $q>1$, there are constants $c_q,C_q>0$ such that 
$$c_q\left\|\sqrt{\sum_{n\leq x}X_n^2}\right\|_q\leq \|M(x)\|_q\leq C_q\left\|\sqrt{\sum_{n\leq x}X_n^2}\right\|_q.$$

So if we further assume that for some positive constants $A$ and $B$, $A\leq|X_n|\leq B$ for all $n$, then the hypothesis of Theorem \ref{theorem geral} are satisfied with $\psi(x)=constant$. Other subset of hypothesis on $(X_n)_n$ could lead to same conclusions.
\end{example}

\begin{example} Theorem 1 of Erd\H{o}s and Hunt \cite{erdos_hunt_sign_changes} states that if $(X_n)_n$ are symmetric and i.i.d. random variables, then
$$ \EE V(x)\geq \frac{1}{2}\log x +O(1).$$

Our Theorem \ref{theorem geral}, in particular, partially recovers this result of Erd\H{o}s and Hunt. We say partially, because in \cite{erdos_hunt_sign_changes} they do not need any other assumption on the moments of $(X_n)_n$, while our Theorem \ref{theorem geral} demands the existence of two moments that match their order of magnitude. On the other hand, we do not need the symmetry hypothesis, and as our example before shows, our method can be used in another settings of dependencies among $(X_n)_n$. 

Therefore, if in addition we assume that $(X_n)_n$ are i.i.d. and $\EE X_1^{2+\epsilon}<\infty$, then we can use the Burkholder's inequality (or the Marcinkiewicz-Zygmund's inequality) of the example before to guarantee that the first moment has the same order of magnitude of the second moment, via an interpolation-norm argument. With this we conclude that 
$$\EE V(x)\gg \log x.$$

\end{example}

\begin{example}[$\mathcal{B}_2$ or Sidon sets] A set $S=\{n_1,n_2,...\}$ of positive integers is called $\mathcal{B}_2$ or Sidon if there exists a constant $C>0$ such that for all $n\geq 1$, the equation
$$n=n_j\pm n_k$$
has at most $C$ solutions with $n_j$ and $n_k$ in $S$. By letting $U$ a random variable with uniform distribution over $[0,2\pi]$ and $M(x)$ the cosine polynomial
$$M(x):=\sum_{k\leq x}\cos(n_k U),$$
an old result of Sidon \cite{sidon} states that 
$$\EE M(x)^4\asymp (\EE M(x)^2)^2\asymp x^2.$$
This implies that
$$\EE |M(x)|\asymp (\EE M(x)^2)^{1/2},$$
and hence the hypothesis of Theorem \ref{theorem geral} are satisfied with $\psi(x)$ a constant. Thus, in this case we also have that
$$\EE V(x)\gg \log x.$$

\end{example} 

\begin{example} Our method is flexible enough to hold other situations where not necessarily $\EE X_n^2\gg 1$. For instance, the interested reader can outline the details and prove, by using our method, that in the case $X_n=r_n/\sqrt{n}$, where $(r_n)_n$ are i.i.d. Rademacher random variables, that 
$$\EE V(x)\gg \log\log x.$$
\end{example}

\section{Preliminaries}
\subsection{Notation} 
In this subsection we make a summary of all recurrent notation used in this paper. We hope that it may be useful for the reader always when he or she becomes overloaded with the plenty number of notation used here.

\subsubsection{Letters appearing throughout the text} We will let the letter $p$ to always represent a generic prime number, $n$ to represent a positive integer, $x$ and $y$ real variables used as the edge of an index of summation. $q$ is reserved for the size of the moments of certain random variables. We let $c_j$ to represent positive auxiliary constants and greek letters for the ultimate constants. $\lambda$ will be used as a threshold in a event where a random variable is above or beyond this threshold. The letter $f$ represents our Rademacher random multiplicative function. $\PP$ is the probability of an event.

\subsubsection{Asymptotic notation} We use the standard Vinogradov notation $f(x)\ll g(x)$ or Landau's $f(x)=O(g(x))$ whenever there exists a constant $c>0$ such that $|f(x)|\leq c|g(x)|$, for all $x$ in a set of parameters. When not specified, this set of parameters is an infinite interval $(a,\infty)$ for sufficiently large $a>0$. Sometimes is convenient to indicate the dependence of this constant in other parameters. For this, we use both $\ll_\delta$ or $O_\delta$ to indicate that $c$ may depends on $\delta$. We say that $f\asymp g$ if both $f\ll g$ and $g\ll f$ are realized. The standard $f(x)=o(g(x))$ means that $f(x)/g(x)\to0$ when $x\to a$, where $a$ could be a complex number or $\pm \infty$. 
\subsection{Some estimates}
For $x\geq 1$, $N\in\NN$, and $1\leq q<2$, define
\begin{equation}\label{equacao definicao Lambda}
\Lambda(N,x,q)=\sum_{n\leq N}\frac{1}{(1+(1-q/2)\sqrt{\log\log(e^nx)})^{q/2}}.
\end{equation}
\begin{lemma}\label{lemma estimate for Lambda} For $\Lambda(N,x,q)$ defined as above, if $1\leq q<1.9$, $N=o(\log x)$, then, as $x,N\to\infty$
$$\Lambda(N,x,q)= \frac{1+o_q(1)}{(1-q/2)^{q/2}}\frac{N}{(\log\log x)^{q/4}}.$$
\end{lemma}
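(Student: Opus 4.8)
The plan is to treat $q$ as a fixed parameter in $[1,1.9)$, so that $1-q/2$ is a fixed constant lying in $(1/20,1/2]$, and to show that every summand in \eqref{equacao definicao Lambda} equals the single quantity $\big((1-q/2)^{q/2}(\log\log x)^{q/4}\big)^{-1}$ up to a relative error $o_q(1)$ that is \emph{uniform} in $1\leq n\leq N$. Summing the $N$ nearly equal terms then yields the stated asymptotic.

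First I would record the elementary identity $\log\log(e^n x)=\log(\log x+n)=\log\log x+\log(1+n/\log x)$. Since $N=o(\log x)$ and $1\leq n\leq N$, the ratio satisfies $n/\log x\leq N/\log x=o(1)$, so $\log(1+n/\log x)=O(N/\log x)$ uniformly in $n$. Dividing by $\log\log x$, which tends to infinity, gives
$$\frac{\log\log(e^n x)}{\log\log x}=1+O\!\left(\frac{N}{(\log x)(\log\log x)}\right)=1+o(1),$$
with the $o(1)$ uniform over $n\leq N$. Raising to the power $q/4$ then shows $(\log\log(e^n x))^{q/4}=(\log\log x)^{q/4}(1+o(1))$, again uniformly.

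Next I would remove the additive $1$ in the denominator. Writing
$$1+(1-q/2)\sqrt{\log\log(e^n x)}=(1-q/2)\sqrt{\log\log(e^n x)}\left(1+\frac{1}{(1-q/2)\sqrt{\log\log(e^n x)}}\right),$$
and using $\log\log(e^n x)\geq\log\log x\to\infty$, the inner factor is $1+O_q\big((\log\log x)^{-1/2}\big)=1+o_q(1)$ uniformly. Raising to the power $q/2$ and combining with the previous step gives, uniformly in $1\leq n\leq N$,
$$\frac{1}{\big(1+(1-q/2)\sqrt{\log\log(e^n x)}\big)^{q/2}}=\frac{1+o_q(1)}{(1-q/2)^{q/2}(\log\log x)^{q/4}}.$$

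Finally, summing this identity over the $N$ values $1\leq n\leq N$ produces
$$\Lambda(N,x,q)=\frac{N(1+o_q(1))}{(1-q/2)^{q/2}(\log\log x)^{q/4}},$$
which is the claim. The only delicate point is that every error term must be controlled \emph{uniformly} in $n$, so that the errors do not accumulate when $N\to\infty$ terms are summed; this uniformity is precisely what the hypothesis $N=o(\log x)$ secures, by bounding $n/\log x$ by $N/\log x=o(1)$ independently of $n$. The restriction $q<1.9$ plays no essential role here beyond keeping $1-q/2$ bounded away from $0$, which renders the implied $o_q(1)$ constants uniform across the admissible range of $q$.
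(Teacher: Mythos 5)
Your proof is correct and follows essentially the same route as the paper's: a Taylor-type expansion of $\log\log(e^n x)$ around $\log\log x$ using $N=o(\log x)$, factoring out $(1-q/2)\sqrt{\log\log x}$ to absorb the additive $1$ into a $1+o_q(1)$ factor, and summing the $N$ asymptotically equal terms. Your explicit emphasis on uniformity in $n$ is a welcome clarification of a point the paper leaves implicit, but the argument is the same.
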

\begin{proof}
By doing some routine Taylor expansion, we have that, if $N=o(\log x)$,
$$\log\log (e^n x)=\log\log x+O\left(\frac{N}{\log x}\right).$$
This allows us to infer that
\begin{multline*}
1+(1-q/2)\sqrt{\log\log(e^nx)}\\=(1-q/2)\sqrt{\log\log x}\left(1+\frac{1}{(1-q/2)\sqrt{\log\log x}}+O\left(\frac{N}{(\log x)(\log\log x)}\right)\right).\\
\end{multline*}
On the other hand,
\begin{multline*}
\frac{1}{1+\frac{1}{(1-q/2)\sqrt{\log\log x}}+O\left(\frac{N}{(\log x)(\log\log x)}\right)}\\=1+O\left( \frac{1}{(1-q/2)\sqrt{\log\log x}}\right)+O\left(\frac{N}{(\log x)(\log\log x)}\right).
\end{multline*}
Finally, by combining these estimates, we obtain that
$$\frac{1}{(1+(1-q/2)\sqrt{\log\log(e^nx)})^{q/2}}=\frac{1}{(1-q/2)^{q/2}(\log\log x)^{q/4}}(1+o_q(1)),$$
and summing over $n$ the expression above, we obtain the desired estimate. \end{proof}

\section{Proof of the main result}
\subsection{Setting the definitions} Let $x>0$ be large and set 
\begin{equation}\label{equation definition Y}
Y_n=\frac{M_f(e^nx)}{\sqrt{e^n x}}.
\end{equation}

Since the quantity of squarefree numbers has density $\frac{6}{\pi^2}$, we have that for sufficiently large $x$, $\frac{3}{\pi^2}\leq\EE Y_n^2\leq 1$, for all $n$.
We define 
\begin{align}\label{equation difinition S and S*}
S_N&:=Y_1+...+Y_N,\\
S_N^{*}&:=|Y_1|+...+|Y_N|.
\end{align} 
 
\begin{lemma}\label{lema correlacao} For $x$ sufficiently large, the correlation $\rho_{n,m}$ between $Y_n$ and $Y_m$ for $n<m$ has absolute value at most 
$$ \frac{c_1}{e^{(m-n)/2}},$$
for some positive constant $c_1$.
\end{lemma}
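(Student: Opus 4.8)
The plan is to reduce everything to the second-moment structure of the $f(n)$, which is completely explicit thanks to orthogonality. Recall that for a Rademacher random multiplicative function one has $\EE f(n)=0$ for every $n$ and $\EE f(n)f(m)=\ind[n=m]\,\ind[n \text{ squarefree}]$, since the product $f(n)f(m)$ has nonzero expectation only when every prime occurs to an even power, which forces $n=m$ with $n$ squarefree. Writing $Q(t):=\sum_{n\leq t}\ind[n\text{ squarefree}]$ for the squarefree counting function, this yields, for any $y\leq z$,
$$\EE\big[M_f(y)M_f(z)\big]=\sum_{n\leq y}\sum_{m\leq z}\EE f(n)f(m)=Q(y).$$
In particular $\EE Y_n=\EE M_f(e^nx)/\sqrt{e^nx}=0$, so the correlation coincides with the normalized covariance, $\rho_{n,m}=\EE[Y_nY_m]/\sqrt{\EE[Y_n^2]\,\EE[Y_m^2]}$.

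Next I would substitute $y=e^nx$ and $z=e^mx$ with $n<m$ into the display above to extract the exponential decay. Since the minimum truncation level is $e^nx$, the definition \eqref{equation definition Y} gives
$$\EE[Y_nY_m]=\frac{Q(e^nx)}{\sqrt{e^nx}\,\sqrt{e^mx}}=\frac{Q(e^nx)}{e^nx}\,e^{-(m-n)/2}.$$
The prefactor $Q(e^nx)/(e^nx)$ is exactly $\EE Y_n^2$, so the whole exponential saving $e^{-(m-n)/2}$ arises purely from the mismatch between the truncation level $e^nx$ and the normalization $\sqrt{e^nx}\sqrt{e^mx}$.

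Finally I would combine this with the two-sided control on the variances. Setting $a_k:=\EE Y_k^2=Q(e^kx)/(e^kx)$, the previous step gives
$$|\rho_{n,m}|=\frac{a_n\,e^{-(m-n)/2}}{\sqrt{a_na_m}}=\sqrt{\frac{a_n}{a_m}}\;e^{-(m-n)/2}.$$
By the squarefree density estimate already noted above, namely $\tfrac{3}{\pi^2}\leq a_k\leq 1$ for all $k$ once $x$ is large, one has $\sqrt{a_n/a_m}\leq \pi/\sqrt{3}$, and the claim follows with $c_1=\pi/\sqrt{3}$. There is no genuine obstacle here: the only point requiring care is the clean use of orthogonality to collapse the double sum to $Q(\min(y,z))$, after which the exponential factor is forced by the scaling and the variances are trivially bounded by the squarefree density.
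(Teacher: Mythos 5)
Your strategy coincides with the paper's: exploit orthogonality of $(f(n))_n$ to compute the mixed second moment, extract the factor $e^{-(m-n)/2}$ from the mismatch between the truncation point $e^nx$ and the normalization $\sqrt{e^nx}\sqrt{e^mx}$, and control the denominators via the density of squarefree numbers. Your identity $\EE[M_f(y)M_f(z)]=Q(\min(y,z))$ is correct (it even handles the term $n=m=1$ properly), and with it your computation of the ratio $\EE[Y_nY_m]/\sqrt{\EE Y_n^2\,\EE Y_m^2}$ goes through as written.

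The one genuine error is the claim that $\EE f(n)=0$ for \emph{every} $n$, and its consequence $\EE Y_n=0$. A Rademacher random multiplicative function has $f(1)=1$ deterministically, so $\EE M_f(y)=\sum_{n\leq y}\EE f(n)=1$ for all $y\geq 1$, whence $\EE Y_n=1/\sqrt{e^nx}\neq 0$. Therefore the correlation is not the uncentered ratio you computed but
$$\rho_{n,m}=\frac{\EE[Y_nY_m]-\EE Y_n\,\EE Y_m}{\sqrt{\EE Y_n^2-(\EE Y_n)^2}\,\sqrt{\EE Y_m^2-(\EE Y_m)^2}},$$
which is precisely the expression the paper's proof starts from. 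Fortunately the repair is routine: the correction in the numerator is $\EE Y_n\,\EE Y_m=e^{-(n+m)/2}/x$, which is smaller than $e^{-(m-n)/2}$ by the factor $e^{-n}/x$, and $(\EE Y_k)^2=e^{-k}/x=o(1)$, so the variances in the denominator remain bounded below by $3/\pi^2-o(1)$ for large $x$; the stated bound then follows with a slightly different constant $c_1$. (Alternatively, centering from the start gives the exact identity $\rho_{n,m}=\sqrt{(Q(e^nx)-1)/(Q(e^mx)-1)}$ for $n<m$, from which the exponential bound is immediate since $Q(y)=\tfrac{6}{\pi^2}y+O(\sqrt{y})$.) In summary, your proposal is the same argument as the paper's, marred only by this fixable slip about the mean, which the paper's proof is careful to account for.
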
 
\begin{proof}
The correlation between $Y_n$ and $Y_m$ is given by
$$\frac{\EE Y_nY_m-(\EE Y_n)(\EE Y_m)}{\sqrt{\EE Y_n^2-(\EE Y_n)^2}\sqrt{\EE Y_m^2-(\EE Y_n)^2}}.$$
By orthogonality of $(f(n))_n$,
$$\EE Y_nY_m=\frac{\EE M_f(e^n x)^2}{e^{(n+m)/2}x}=(6/\pi^2+o(1))\frac{e^n}{e^{(n+m)/2}}=(6/\pi^2+o(1))\frac{1}{e^{(m-n)/2}}.$$
Now
$$(\EE Y_n)(\EE Y_m)=\frac{1}{e^{(n+m)/2}}.$$
And moreover, for sufficiently large $x$, for all $n,m\geq 1$
$$\EE Y_n^2-(\EE Y_n)^2\geq \frac{6}{\pi^2}-o(1)-e^{-2}\geq 0.4-o(1),$$
and hence the claim follows.\end{proof}

\begin{lemma}\label{lemma chebyshev bound} Let $S_N$ be as above and $x$ be sufficiently large. Then, there exists a constant $c_2>0$ such that, for any $\lambda>0$
$$\PP(|S_N|\geq \lambda)\leq \frac{c_2N}{\lambda^2}.$$
\end{lemma}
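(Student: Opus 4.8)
The plan is to prove a Chebyshev-type bound on $\PP(|S_N|\geq\lambda)$ by controlling the variance of $S_N$ via the second moment, and the key observation is that the variance grows only linearly in $N$ rather than quadratically, precisely because the correlations decay exponentially. First I would write
$$\EE S_N^2=\sum_{n=1}^N\EE Y_n^2+2\sum_{1\leq n<m\leq N}\EE Y_nY_m.$$
Using $\EE Y_n^2\leq 1$ for all $n$ (from the squarefree density estimate established in the setup), the diagonal contributes at most $N$. For the off-diagonal terms I would invoke Lemma \ref{lema correlacao}: since each $Y_n$ has variance $\EE Y_n^2-(\EE Y_n)^2\leq 1$, the covariance $|\EE Y_nY_m-(\EE Y_n)(\EE Y_m)|$ equals $|\rho_{n,m}|$ times the product of the standard deviations, which is at most $|\rho_{n,m}|\leq c_1 e^{-(m-n)/2}$.

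Next I would bound the double sum by summing the geometric decay. Writing $k=m-n\geq 1$ and counting that each value of $k$ occurs for at most $N$ pairs, I get
$$2\sum_{1\leq n<m\leq N}|\rho_{n,m}|\leq 2c_1\sum_{n=1}^N\sum_{k\geq 1}e^{-k/2}=2c_1 N\cdot\frac{e^{-1/2}}{1-e^{-1/2}}\ll N,$$
so the entire off-diagonal sum is $O(N)$. Combining the diagonal and off-diagonal estimates yields $\EE S_N^2\leq c_2 N$ for some absolute constant $c_2>0$, once $x$ is large enough that the $o(1)$ terms in the correlation bound are harmless. (Strictly, I should work with $\EE S_N^2$ rather than the variance of $S_N$; since the claimed inequality is a one-sided tail bound via Markov applied to $S_N^2$, the uncentered second moment is exactly what is needed, and the mean terms only help.)

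Finally, Chebyshev's (really Markov's) inequality gives
$$\PP(|S_N|\geq\lambda)=\PP(S_N^2\geq\lambda^2)\leq\frac{\EE S_N^2}{\lambda^2}\leq\frac{c_2 N}{\lambda^2},$$
which is exactly the claim. The only step requiring care is the bookkeeping in the off-diagonal sum: one must make sure the geometric series over the gaps $m-n$ is summed correctly and that the resulting constant is absorbed into $c_2$, and one must confirm that the covariances, not merely the correlations, are what enter the second-moment expansion. I do not expect any genuine obstacle here, since the exponential decay in Lemma \ref{lema correlacao} makes the variance of the sum comparable to the sum of the variances; the linear-in-$N$ bound then follows immediately.
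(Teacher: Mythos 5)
Your proposal is correct and is essentially the paper's own argument: expand $\EE S_N^2$, bound the diagonal by $N$ using $\EE Y_n^2\leq 1$ and the off-diagonal via the exponential correlation decay of Lemma \ref{lema correlacao} summed as a geometric series, then apply Markov's inequality to $S_N^2$. The only slip is your remark that the mean terms ``only help'': since $\EE Y_n = e^{-n/2}x^{-1/2}>0$, the products $(\EE Y_n)(\EE Y_m)$ actually \emph{add} to $\EE S_N^2$ rather than subtract, but their total contribution is $O(1/x)$ and is absorbed into $c_2N$, so the conclusion is unaffected.
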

\begin{proof}
We have that, for $x$ sufficiently large, $\EE Y_n^2\leq 1$ and Lemma \ref{lema correlacao} is applicable so that
\begin{align*}
\EE S_N^2\leq N+2\sum_{1\leq n\leq N-1}e^{n/2}\sum_{n<m\leq N}e^{-m/2}\ll N.
\end{align*}
The claim follows by the Markov's inequality.
\end{proof}

\subsection{Proof of Theorem \ref{thorem local sign changes}}
\begin{proof} Let $S_N^*$ be as in \eqref{equation difinition S and S*} above. Let $N=o(\log x)$. We have that, by Harper's result on small moments (Theorem 2 of \cite{harperfirstmoment})
$$\EE S_N^*\asymp\Lambda(N,x,1).$$
Now, for some constant $c_3>0$ and $0<\epsilon<c_3$
$$c_3\Lambda(N,x,1)\leq\EE (S_N^*\ind_{S_N^*< \epsilon\Lambda(N,x,1)})+\EE (S_N^*\ind_{S_N^*\geq \epsilon\Lambda(N,x,1)}).$$
Therefore
\begin{align*}
(c_3-\epsilon)\Lambda(N,x,1)&\leq \EE (S_N^*\ind_{S_N^*\geq \epsilon\Lambda(N,x,1)})\\
&\leq (\EE (S_N^{*3/2}))^{2/3}(\PP(S_N^*\geq \epsilon\Lambda(N,x,1)))^{1/3}.
\end{align*}
Now we have that
$$\EE (S_N^{*3/2})\leq \sqrt{N}\EE\sum_{n\leq N}|Y_n|^{3/2}\leq c_4\sqrt{N}\Lambda(N,x,3/2).$$
Our target is to make $x$ arbitrarily large and keep $N=o(\log x)$. In particular, Lemma \ref{lemma estimate for Lambda} is applicable and hence
$$\EE (S_N^{*3/2})\leq c_5 \sqrt{N}\frac{N}{(\log\log x)^{3/8}}.$$
On the other hand, 
$$\Lambda(N,x,1)\geq c_6\frac{N}{(\log\log x)^{1/4}}.$$

This allow us to infer that that for some constant $c_7>0$ depending in $\epsilon$, the probability
$$\PP(S_N^*\geq \epsilon\Lambda(N,x,1))\geq c_7,$$
for all $x$ sufficiently large and all $N=o(\log x)$.

To complete the argument, we see that by Lemma \ref{lemma chebyshev bound}, for any fixed and small $\delta>0$
$$\PP(|S_N|\geq \Lambda(N,x,1)^{1-\delta})\ll\frac{N}{N^{2-2\delta}}(\log\log x)^{(1-\delta)/2}.$$
As long $\log\log x \ll N^{2-\epsilon}$ where $\epsilon=4\delta/(1-\delta)$, the probability above is $O(N^{-\delta})$, in our range of parameters $x$ and $N$.

Finally, observe that under the events
\begin{align*}
&A:=[S_N^*\geq \epsilon\Lambda(N,x,1)],\\
&B:=[|S_N|\leq \Lambda(N,x,1)^{1-\delta}],
\end{align*}
we must have at least one sign change in the sequence $Y_1,...,Y_N$, meaning that $M_f(y)$ has at least one sign change in the interval $[x,e^Nx]$. To finish the proof, since $\PP(B)=1-o(1)$ and $\PP(A)\geq c_7$, we have that $\PP(A\cap B)\geq c_7/2:=\kappa>0$, for all $x$ and $N$ sufficiently large and in the claimed range. \end{proof}

\subsection{Proof of Corollary \ref{corollary average} }

\begin{proof}
We let $\ell$ run over the positive integers and choose $x_\ell=e^{\ell(\log \ell)^{1/2+\epsilon}}$, for some fixed and small $0<\epsilon<1/100$. Define
$$e^N:=\frac{x_{\ell+1}}{x_\ell}=e^{(\log \ell)^{1/2+\epsilon}+O(1)}.$$
Then $N=o(\log x_\ell)$ and $\log\log x_\ell=\log \ell+(1/2+\epsilon)\log\log \ell\ll N^{2-\epsilon/2}$, and hence Theorem \ref{thorem local sign changes} is applicable:
The probability that $M_{f}(u)$ has at least one sign change in the interval $u\in [x_\ell,x_{\ell+1}]$ is lower bounded by some absolute $\kappa>0$, for all $\ell\geq \ell_0$.

Denote by $V(a,b)$ the number of sign changes of $M_f(u)$ in the interval $u\in(a,b]$.
Then
$$V(x)\geq\sum_{x_\ell\leq x}V(x_{\ell-1},x_\ell).$$
Now, $V(x_{\ell-1},x_\ell)$ is lower bounded by $\ind_{V(x_{\ell-1},x_\ell)\geq1}$, since it is a non-negative and integer valued random variable.
Thus, if $\ell^*$ is the largest integer such $x_{\ell^*}\leq x$,
$$\EE V(x)\geq \kappa \sum_{x_{\ell_0}\leq x_\ell\leq x}1\geq \kappa \ell^*-\kappa \ell_0.$$
To finish the argument, as $x\to\infty$ we have that $\ell^*=(1+o(1))\frac{\log x}{(\log\log x)^{1/2+\epsilon}}$, and this completes the proof.
\end{proof}

\section{Proof of the General Theorem}
Now we turn our attention to
\begin{proof}[Proof of Theorem \ref{theorem geral}] Without loss of generality, we may assume that $q_1=1$. Otherwise, we interpolate the norms and reach that 

$$\|M(x)\|_1\asymp \frac{\sqrt{x}}{\psi(x)}\asymp \|M(x)\|_{q_2}.$$

Similarly as in Lemma \ref{lemma estimate for Lambda}, define for $j=1,2$, 
$$\Lambda_j(N,x)=\sum_{n\leq N}\frac{1}{\psi(e^n x)^{q_j}},$$
where $q_1=1$.

The hypothesis
$$\psi(e^n x)=\psi(x)\left(1+O\left(\frac{n}{\log x}\right)\right)$$
allows us to infer that, as $x\to\infty$ with $N=o(\log x)$
$$\Lambda_j(N,x)=(1+o(1))\frac{N}{\psi(x)^{q_j}}.$$

Let $Y_n=M(e^n x)/(\sqrt{e^n x})$ and
$$S_N:=Y_1+...+Y_N,$$
$$S_N^*:=|Y_1|+...+|Y_N|.$$

Similarly as in Lemma \ref{lemma chebyshev bound}, the orthogonality and unit variance of $(X_n)_n$ allow us to infer that, 
$$\PP(|S_N|\geq \Lambda_1(N,x)^{1-\delta})\leq \frac{\psi(x)^{2(1-\delta)}}{N^{1-2\delta}}.$$
If $\psi(x)^2\ll N^{1-3\delta}$, the probability above is $O(N^{-\delta})$.

Now, repeating the argument as before, for sufficiently small $\epsilon>0$ we have that

$$\frac{N}{\psi(x)}\ll \EE S_N^*\ind_{S_N^*\geq \epsilon\Lambda_1(N,x)}\leq \|S_N^*\|_{q_2}(\PP(S_N^*\geq \epsilon\Lambda_1(N,x)))^{1-1/q_2}.$$

By the Minkowski inequality for the $L^{q_2}$ space, we have that
$$\|S_N^*\|_{q_2}\leq \sum_{n\leq N}\|Y_n\|_{q_2}\ll \frac{N}{\psi(x)}.$$
Thus, for all $x$ sufficiently large, the probability
$$\PP(S_N^*\geq \epsilon\Lambda_1(N,x))\geq \kappa,$$
for some $\kappa>0$.

As before, under the events $|[S_N|\leq \Lambda_1(N,x)^{1-\delta]}$ and $[S_N^*\geq \epsilon\Lambda_1(N,x))]$, if $x$ and $N$ are large enough, we have that $M(u)$ has at least one sign change in the interval $[x,e^N x]$. Moreover, the intersection of this two events has probability at least
$$\kappa+O(N^{-\delta}).$$

Thus, the probability to have at least one sign change in the interval $[x,e^N x]$ is at least $\kappa/2$, if $x$ and $N$ are sufficiently large.

\end{proof}

\noindent\textbf{Acknowledgements.} I would like to thank Winston Heap for the many helpful comments, Caio Bueno for a careful revision of a draft version of this paper, and to Bryce Kerr for pointing out the example of Sidon sets. I also am thankful to the anonymous referee for his/her suggestions that improved the exposition of this paper. The research of the author is funded by FAPEMIG grant Universal APQ-00256-23 and by CNPq grant Universal 403037/2021-2.   

\noindent\textbf{Ethical statement.} The author declares that he followed the ethical rules of this journal and its publisher in the preparation of this manuscript.

\noindent\textbf{Conflict of interest.} The author declares that he has no conflict of interest.

\vspace{2cm}

{\small{\sc \noindent
Departamento de Matem\'atica, Universidade Federal de Minas Gerais (UFMG), Av. Ant\^onio Carlos, 6627, CEP 31270-901, Belo Horizonte, MG, Brazil.} \\
\textit{Email address:} \verb|aymone.marco@gmail.com| }


\begin{thebibliography}{10}

\bibitem{aymone_sign_changes_II}
{\sc M.~Aymone}, {\em Sign changes of the partial sums of a random
  multiplicative function {II}}, C. R. Math. Acad. Sci. Paris, 362 (2024),
  pp.~895--901.

\bibitem{aymone_sign_changes}
{\sc M.~Aymone, W.~Heap, and J.~Zhao}, {\em Sign changes of the partial sums of
  a random multiplicative function}, Bull. Lond. Math. Soc., 55 (2023),
  pp.~78--89.

\bibitem{erdos_hunt_sign_changes}
{\sc P.~Erd\"os and G.~A. Hunt}, {\em Changes of sign of sums of random
  variables}, Pacific J. Math., 3 (1953), pp.~673--687.

\bibitem{geis_counting_sign_changes}
{\sc N.~Geis and G.~Hiary}, {\em Counting sign changes of partial sums of
  random multiplicative functions}, Q. J. Math., 76 (2025), pp.~19--45.

\bibitem{harperfirstmoment}
{\sc A.~J. Harper}, {\em Moments of random multiplicative functions, {I}: {L}ow
  moments, better than squareroot cancellation, and critical multiplicative
  chaos}, Forum Math. Pi, 8 (2020), pp.~e1, 95.

\bibitem{pintz_sign_changes_1}
{\sc J.~Kaczorowski and J.~Pintz}, {\em Oscillatory properties of arithmetical
  functions. {I}}, Acta Math. Hungar., 48 (1986), pp.~173--185.

\bibitem{Pintz_sign_changes_2}
\leavevmode\vrule height 2pt depth -1.6pt width 23pt, {\em Oscillatory
  properties of arithmetical functions. {II}}, Acta Math. Hungar., 49 (1987),
  pp.~441--453.

\bibitem{klurman_counting_sign_changes}
{\sc O.~Klurman, Y.~Lamzouri, and M.~Munsch}, {\em {S}ign changes of short
  character sums and real zeros of {F}ekete polynomials}, arXiv:2403.02195,
  (2024).

\bibitem{shiryaev}
{\sc A.~N. Shiryaev}, {\em Probability}, vol.~95 of Graduate Texts in
  Mathematics, Springer-Verlag, New York, second~ed., 1996.
\newblock Translated from the first (1980) Russian edition by R. P. Boas.

\bibitem{sidon}
{\sc S.~Sidon}, {\em Ein {S}taz \"uber {F}ouriersche {R}eihen mit {L}\"ucken},
  Math. Z., 34 (1932), pp.~481--483.

\end{thebibliography}
\end{document}